\renewcommand{\P}{\mathbb{P}}
\newcommand{\E}{\mathbb{E}}
\newcommand{\Rd}{\mathbb{R}^d}
\newcommand{\Rp}{\mathbb{R}^+}
\newcommand{\R}{\mathbb{R}}
\newcommand{\barr}{\begin{array}{rcl}}
\newcommand{\earr}{\end{array}}
\newcommand{\disp}{\displaystyle}
\newcommand{\Ema}{\E^{m_a}}
\newcommand{\Pma}{\P^{m_a}}
\newcommand{\ind}{\textup{1{\hskip -2.5 pt}\hbox{I}}}
\newcommand{\mathleft}{\@fleqntrue\@mathmargin\parindent}
\newcommand{\mathcenter}{\@fleqnfalse}
\begin{document}

\title{A Multiplier Related to Symmetric Stable Processes}

\author{ Deniz Karl\i\footnote{ I\c{s}\i k University, Department of Mathematics, AMF233,  \c{S}ile Istanbul Turkey 34980\ Email: {\tt deniz.karli@isikun.edu.tr  \& deniz.karli@gmail.com}}\footnote{Partially funded by the BAP project 14B103 of I\c{s}\i k University.}}
{ }

\begin{abstract}
In two recent papers \cite{karli} and \cite{karli2}, we generalized some classical results of Harmonic Analysis using probabilistic approach by means of a $d$-dimensional rotationally symmetric stable process. These results allow one to discuss some boundedness conditions with weaker hypotheses. In this paper, we study a multiplier theorem using these more general results. We consider a product process consisting of a $d$-dimensional symmetric stable process and a 1-dimensional Brownian motion, and use properties of jump processes to obtain bounds on jump terms and the $L^p(\Rd)$-norm of a new operator. 
\end{abstract}

\begin{keyword}
symmetric stable process, Fourier multiplier, harmonic extension, bounded operator.
\end{keyword}

\begin{AMS}
Primary 60J45; Secondary 42A61, 60G46.
\end{AMS}

\section{Introduction and Preliminaries}

Classical multiplier problems have been studied for a long time using techniques of  Analysis. For nearly 40 years, a probabilistic approach has been developed which allowed further studies in Operator Theory. The early probabilistic techniques were based on properties of Brownian motion and martingales. In late 1970's and early 1980's this study was generalised to a setup where Markov processes were in focus. Some early research was done by P.A. Meyer. He presented in \cite{meyer,meyer_2,meyer_3} probabilistic techniques which were used to prove boundedness of Littlewood-Paley functions by means of martingales. After the presentation of these new techniques,  boundedness results on $L^p(\Rd)$ were revisited. In early years, the main approach was  to develop alternative  proofs of classical results in the language of probability. Later  some generalisations followed. We recommend the reader  \cite[Chapter IV]{bass} to have a grasp of the probabilistic approach to this theory when Brownian motion is considered as the Markov process of interest.

Together with the boundedness of  operators, multiplier theorems, among which there are Mihlin's and Marcinkiewicz' \cite[IV.5]{bass}, are also widely studied from the probabilistic point of view.  Well-known multiplier theorems became the focus of this research, and it was aimed to determine whether their constraints could be generalised by the tools of Martingale Theory. One of these studies was done by N. Bouleau and D. Lamberton in \cite{bouleau} where the Authors proved boundedness of an operator using a product of a Markov process with a stable process. Their arguments yields a new multiplier theorem with a new set of conditions. The condition is the integrability of a bounded Borel measurable function with respect to a kernel corresponding to their product process. 

Our motivation originates from aforementioned paper \cite{bouleau}. In the their work, N. Bouleau and D. Lamberton worked with a general type of processes. However the tools of the general case are restrictive. In a previous paper \cite{karli}, we focused on a specific process, namely a symmetric stable process, and we replaced Brownian motion of the classical setup with this stable process. By means of these processes, we introduced new types of Littlewood-Paley operators, and proved boundedness results which could not be proved for general Markov processes. These new operators present an opportunity to study multiplier results obtained in \cite{bouleau} once again. Now we have a set of more specific tools using which we aim to prove a new multiplier theorem  in this paper and present an application of our  new boundedness results.

In  \cite{karli} and later in \cite{karli2}, we studied some operators related to classical Littlewood-Paley Theory. For this purpose, we adapted probabilistic approach to the theory. In the classical setup, one uses a $d+1$-dimensional Brownian motion in the upper half space of $\Rd\times\Rp$ and let it run until the process hits the boundary. In our work, we replaced $d$-dimensional component of Brownian motion with a symmetric stable process and kept 1-dimensional Brownian motion as is. Under this new setup, we introduced some new operators, and we proved their boundedness under certain conditions. 

First of all, we introduce some preliminary results from \cite{karli} and state references for details in this section. Throughout the paper $c$ will denote a positive constant. Its value may change from line to line. 

We consider a $d$-dimensional right continuous (rotationally) symmetric $\alpha$-stable process $Y_t$ for $\alpha\in (0,2)$, that is, $Y_t$ is a right continuous Markov process with independent and stationary increments whose characteristic function is $\E(e^{i\xi Y_s})=e^{-s|\xi|^\alpha}, \xi\in\Rd, s>0$. The function $p(s,x,y)$ will denote its  (symmetric) transition density such that 
$$\P^x(Y_s\in A)=\int_A p(s,x,y)\,dy,$$
and $P_s$  will denote the corresponding semi-group $P_s(f)(x)=\E^x(f(Y_s))$. Here $\P^x$ is the probability measure for the process started at $x\in \Rd$, and $\E^x$ is the expectation taken with respect to $\P^x$. A well-known property of symmetric $\alpha-$stable processes is that the transition density $p(s,x,0)$ satisfies the scaling property 
\begin{align}\label{scaling} p(s,x,0)=s^{-d/\alpha}p(1,x/s^{1/\alpha},0), \qquad x\in\Rd,\, s>0,\end{align} 
which can be verified by using the characteristic function.

Similarly, we denote  a one-dimensional Brownian motion (independent from $Y_s$) by $Z_s$ and the probability measure for the process started at $t>0$ by $\P^t$. The process of interest is the product $X_s=(Y_s,Z_s)$ started at $(x,t)\in\Rd\times\Rp$, the corresponding probability measure and the expectation are $\P^{(x,t)}$ and $\E^{(x,t)}$, respectively. Next, define the stopping time $T_0=\inf\{s\geq 0: Z_s=0\}$ which is the first time $X_t$ hits the boundary of $\Rd\times\Rp$. Note that the first exit time $T_0$ depends only on $Z_s$ due to the independence of $Z_s$ and $Y_s$.

To provide a connection between probabilistic and deterministic integrals, we will use two main tools; a new measure $\Pma$ and the vertical Green function.  Denoting the Lebesgue measure on $\Rd$ by $m(\cdot)$, we define the measure $\Pma$ by
$$\Pma=\int_{\Rd} \P^{(x,a)} m(dx),\qquad a>0.$$
Note that this is not a probability measure.
Let $\Ema$ denote the ``expectation" with respect to this measure. We note that for any measurable set $A$ in $\Rd$,
\begin{align}\label{law_of_exit}
	\Pma(X_{T_0}\in A)&=\int\int_0^\infty\int \ind_A(y)p(s,x,y)dy\,\P^t(T_0\in ds)\,m(dx) \nonumber\\
					&=\int_0^\infty\int\int \ind_A(x+y) m(dx)\,p(s,0,y)dy\,\P^t(T_0\in ds)=m(A),
\end{align}
and so the law of $X_{T_0}$ under this new measure is $m(\cdot)$.  Throughout this paper, we will use $dx$ to denote Lebesgue measure on $\Rd$. Just to emphasize the connection between the measure $\Pma$ and the Lebesgue measure, sometimes we will use $m(dx)$ instead of $dx$. But both of them should be treated as the same measure.

Second, for a positive Borel function $f$, the vertical Green function is given by
\begin{align}\label{greens_function}
	\E^a\left[\int_0^{T_0}f(Z_s)\, ds\right]=\int_0^\infty (s\wedge a)f(s)ds.
\end{align}
This property will be used together with the new measure $\Pma$ in Section \ref{multiplier_section}.

 Harmonic functions play a key role in showing boundedness of Littlewood-Paley operators. Here we adapt the probabilistic interpretation of  a harmonic function. A continuous function $u:\Rd\times\Rp\rightarrow \mathbb{R}$ is said to be harmonic (with respect to the process $X_t$) if $u(X_{s\wedge T_0})$ is a martingale with respect to the filtration $\mathcal{F}_s=\sigma(X_{r\wedge T_0}: r\leq s)$ and the probability measure $\P^{(x,t)}$ for any starting point $(x,t)\in\Rd\times\Rp$. One way to obtain such a harmonic function is to start with a bounded Borel function $f:\Rd\rightarrow\mathbb{R}$ and define an extension $u$ by
\begin{align*}
	u(x,t):=\E^{(x,t)}f(Y_{T_0})=\int_0^\infty \E^xf(Y_s)\P^t(T_0\in ds)
\end{align*}
where $\P^t(T_0\in ds)$ is the exit distribution of one-dimensional Brownian motion which is given by
\begin{align*}
	\mu_t(ds):=\P^t(T_0\in ds)=\frac{t}{2\sqrt{\pi}}e^{-t^2/4s}s^{-3/2}ds
\end{align*}
(see \cite{meyer}). By a slight abuse of notation, we will denote both the function on $\Rd$ and its extension to the upper-half space by the same letter, that is, $f_t(x):=f(x,t)=\E^{(x,t)}f(Y_{T_0})$. Next, we define the semi-group $Q_t=\int_0^\infty P_s \mu_t(ds)$.  This semi-group provides us a representation of the extension 
\begin{align*}
	\disp f_t(x)=f(x,t)=Q_tf(x)=\int f(y)\int_0^\infty p(s,x,y)\mu_t(ds)dy.
\end{align*}
We note that this is a convolution with the probability kernel $$q_t(x)=\int_0^\infty p(s,x,0)\mu_t(ds)$$  which is radially decreasing in $x$ (see \cite[Section 3]{karli}) and its Fourier transform is 
\begin{align}\label{q_transform}
	\disp \widehat{q_t}(\cdot)=e^{-t|\cdot|^{\alpha/2}}.
\end{align}

An immediate result is that both of the semi-groups $P_t$ and $Q_t$ are invariant under the Lebesgue measure, that is,
\begin{align}\label{invariance}
	\int P_tf(x)m(dx)=\int f(x)m(dx)=\int Q_tf(x)m(dx).
\end{align}
This follows from the symmetry of the kernels and the invariance under the dual action.

One of the key tools in proving certain inequalities is the density estimates on $p(s,x,0)$. Although there is an infinite series expansion, it is not very easy to work with. For this purpose, we will use the two-sided estimate
  \begin{align}\label{sas_estimate}
 		c_1\,(s^{-d/\alpha} \wedge \frac{s}{|x-y|^{d+\alpha}}) \leq p(s,x,y) \leq c_2\,(s^{-d/\alpha} \wedge \frac{s}{|x-y|^{d+\alpha}}) 
  \end{align}
$(s,x,y)\in\Rp\times\Rd\times\Rd$, which allows us to control the tail of the transition density. In addition, we will need to control the derivative of $p(s,x,0)$. The following Lemma provides this control. Let $\partial^k_{x_j}$ denote the $k^{th}$ partial derivative in the direction of $j^{th}$ coordinate.

\begin{lemma}\label{lemma_der_of_dens}
For $k=1,2$ and $j=1,...,d$
\begin{itemize}
	\item[i.] $\disp \left|\partial_{x_j}^kp(1,x,0)\right|\leq c \left( 1 \wedge \frac{1}{|x|^k}\right)p(1,x,0)$ and \\
	\item[ii.]  $\disp \left|\partial_{x_j}^kp(t,x,0)\right|\leq c \left( t^{-k/\alpha} \wedge \frac{1}{|x|^k}\right)p(t,x,0)$ whenever $t>0$.
\end{itemize}
\end{lemma}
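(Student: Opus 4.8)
The plan is to reduce the general scaling case (ii) to the unit-time case (i), and to establish (i) by exploiting the known density estimate \eqref{sas_estimate} together with a Cauchy-type integral / rescaling argument for the derivatives of the smooth function $p(1,x,0)$. Concretely, differentiating the scaling relation \eqref{scaling}, $p(t,x,0)=t^{-d/\alpha}p(1,x/t^{1/\alpha},0)$, in $x_j$ gives
\begin{align*}
\partial_{x_j}^k p(t,x,0)=t^{-d/\alpha}t^{-k/\alpha}\bigl(\partial_{x_j}^k p\bigr)(1,x/t^{1/\alpha},0),
\end{align*}
so that a bound on $|\partial_{x_j}^k p(1,\cdot,0)|$ in terms of $(1\wedge |\cdot|^{-k})p(1,\cdot,0)$ transports directly to the claimed bound for general $t$: the factor $t^{-k/\alpha}$ matches, and $(1\wedge |x/t^{1/\alpha}|^{-k})p(1,x/t^{1/\alpha},0)=(1\wedge |x/t^{1/\alpha}|^{-k})\,t^{d/\alpha}p(t,x,0)$, with $t^{-k/\alpha}(1\wedge |x/t^{1/\alpha}|^{-k})=t^{-k/\alpha}\wedge |x|^{-k}$. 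Thus part (ii) is immediate from part (i) once the bookkeeping is done, and the real content is part (i).

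For part (i), I would split into two regimes. For $|x|\le 1$ (the ``$1\wedge$'' part), I use that $p(1,\cdot,0)$ is a smooth, bounded function with bounded derivatives of all orders — a standard fact for symmetric stable densities, obtainable by differentiating under the integral in the Fourier inversion formula $p(1,x,0)=(2\pi)^{-d}\int e^{-ix\cdot\xi}e^{-|\xi|^\alpha}\,d\xi$, since $|\xi|^k e^{-|\xi|^\alpha}$ is integrable — and that $p(1,x,0)$ is bounded below by a positive constant on the compact set $\{|x|\le 1\}$ by \eqref{sas_estimate}; hence $|\partial_{x_j}^k p(1,x,0)|\le c\le c'\,p(1,x,0)$ there. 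For $|x|\ge 1$ I need the sharper decay $|\partial_{x_j}^k p(1,x,0)|\le c|x|^{-k}p(1,x,0)$. Here I would use a scaling/Cauchy-estimate trick: for $|x|$ large, write $p(1,y,0)$ for $y$ in a ball of radius $|x|/2$ about $x$ and use that on this ball $p(1,y,0)\asymp |x|^{-(d+\alpha)}$ by \eqref{sas_estimate} (all such $y$ have $|y|\asymp|x|$), then estimate the derivative $\partial_{x_j}^k p(1,x,0)$ by a constant times $(|x|/2)^{-k}\sup_{|y-x|\le |x|/2}|p(1,y,0)|\asymp |x|^{-k}|x|^{-(d+\alpha)}\asymp |x|^{-k}p(1,x,0)$. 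The Cauchy-estimate on the real-variable $k$-th derivative is justified either by iterating the mean value theorem together with the already-established smoothness, or more cleanly by the interior-derivative estimate for the fractional heat equation (of which $p$ is the kernel), or by representing $\partial_{x_j}^k p(1,x,0)$ via the semigroup property $p(1,x,0)=\int p(1/2,x,z)p(1/2,z,0)\,dz$, differentiating in $x$ under the integral, and using the $L^1$ bounds on $\partial_{x_j}^k p(1/2,\cdot,0)$ near the origin together with the tail bound \eqref{sas_estimate} on $p(1/2,\cdot,0)$ away from it.

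The main obstacle I anticipate is making the ``Cauchy estimate for real derivatives'' rigorous with the correct gain of exactly $|x|^{-k}$: a naive iterated mean value theorem loses control because the intermediate points wander, so one really wants either a quantitative interior estimate for solutions of $\partial_t u=-(-\Delta)^{\alpha/2}u$ (which gives $\|\nabla^k u(t)\|_{L^\infty(B_r)}\le c\,r^{-k}\|u\|_{L^\infty(B_{2r})}$ type bounds, though the nonlocal operator forces one to also control $u$ on all of $\Rd$, not just on $B_{2r}$, which is where \eqref{sas_estimate}'s global tail bound is needed), or the semigroup-splitting representation above. The semigroup route seems cleanest: $\partial_{x_j}^k p(1,x,0)=\int \partial_{x_j}^k p(\tfrac12,x,z)\,p(\tfrac12,z,0)\,dz$, split the $z$-integral into $|z-x|\le |x|/2$ and $|z-x|>|x|/2$; on the first piece use part (i) at time $1/2$ (bootstrapping, or equivalently the $|y|\le$ moderate regime after rescaling) to bound $|\partial_{x_j}^k p(\tfrac12,x,z)|\lesssim |x|^{-k}p(\tfrac12,x,z)$ and then reassemble via Chapman–Kolmogorov and \eqref{sas_estimate}; on the second piece $|z|\gtrsim |x|$ so $p(\tfrac12,z,0)\lesssim |x|^{-(d+\alpha)}$ and $\int|\partial_{x_j}^k p(\tfrac12,x,z)|\,dz<\infty$. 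One must take a little care that this is not circular — it is cleanest to first prove the global bound $|\partial_{x_j}^k p(1,x,0)|\le c\,p(1,x,0)$ (the easy ``$1\wedge$'' half, valid for all $x$ after combining the compact-set argument with the crude tail estimate $|\partial_{x_j}^k p(1,x,0)|\lesssim |x|^{-(d+\alpha)}$ from the Fourier side), and then feed that into the splitting argument to upgrade the decay to $|x|^{-k}p(1,x,0)$ for large $|x|$.
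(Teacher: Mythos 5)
The reduction of (ii) to (i) via the scaling relation \eqref{scaling} is correct and matches the paper exactly, and your treatment of the regime $|x|\le 1$ (bounded derivatives from Fourier inversion, lower bound on $p(1,\cdot,0)$ from \eqref{sas_estimate}) is also fine. The problem is the regime $|x|\ge 1$, where you need the extra gain $|x|^{-k}$, and here your proposal does not close.

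Two specific gaps. First, you assert a ``crude tail estimate $|\partial_{x_j}^k p(1,x,0)|\lesssim |x|^{-(d+\alpha)}$ from the Fourier side.'' That decay is not a formal consequence of differentiating under the Fourier inversion integral: the symbol $(i\xi_j)^k e^{-|\xi|^\alpha}$ is not smooth at $\xi=0$, and extracting polynomial decay of its inverse transform requires a genuine analysis of the singularity at the origin; this is essentially the content of the lemma, not a cheap input to it. Second, the semigroup-splitting argument is circular exactly in the place you worried about. Writing $\partial_{x_j}^k p(1,x,0)=\int \partial_{w_j}^k p(\tfrac12,w,0)\,p(\tfrac12,x-w,0)\,dw$ and splitting at $|w|=|x|/2$, the far piece ($|w|>|x|/2$) requires the very bound you are proving, at time $1/2$ and large argument, while on the near piece the crude bound $\int|\partial_{w_j}^k p(\tfrac12,w,0)|\,dw<\infty$ together with $p(\tfrac12,x-w,0)\lesssim |x|^{-(d+\alpha)}$ only yields $c\,p(1,x,0)$, not $c\,|x|^{-k}p(1,x,0)$. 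Rescuing the near piece would require exploiting the cancellation $\int \partial_{w_j}^k p(\tfrac12,w,0)\,dw=0$ via a Taylor expansion of $p(\tfrac12,x-w,0)$ in $w$ — but the first Taylor correction is precisely $\nabla p(\tfrac12,x,0)$, another instance of the quantity being estimated.

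The paper avoids all of this with a clean algebraic identity coming from the subordination representation \eqref{dens_rep}: differentiating under the integral gives $\partial_{x_1}p^{(d)}(1,x,0)=-2\pi x_1\,p^{(d+2)}(1,\tilde x,0)$ (and an analogous two-term identity for the second derivative involving $p^{(d+2)}$ and $p^{(d+4)}$), where $p^{(d+2)}$ is the $(d+2)$-dimensional $\alpha$-stable density and $\tilde x=(x,0,0)$. Applying the two-sided estimate \eqref{sas_estimate} in the higher dimension immediately yields $|\partial_{x_1}p^{(d)}(1,x,0)|\le c\,|x|\cdot|x|^{-(d+2+\alpha)}=c\,|x|^{-k}|x|^{-(d+\alpha)}\asymp |x|^{-k}p^{(d)}(1,x,0)$. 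This exact dimensional-shift identity, rather than a soft interior/Cauchy estimate or a bootstrapping scheme, is the idea your proposal is missing.
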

\begin{proof}
	First note that (ii.) follows from (i.) once we use the scaling property 
	\begin{align*}
		p(t,x,0)=t^{-d/\alpha}p(1,x/t^{1/\alpha},0).
	\end{align*} 
	Hence it is enough to prove part (i.). 
	
	The density function $p(1,x,0)$ is obtained by Fourier inversion of $e^{-|x|^\alpha}$. Moreover, the function  $x^\beta e^{-|x|^\alpha}$ is integrable for any multi-index $\beta$. Hence the density function and all of its partial derivatives are bounded.
	
	Another way to obtain the density function is to use $g_{\alpha/2}$, the density of an $\alpha/2$ stable subordinator whose Laplace transform is given by $\int_0^\infty e^{-\lambda v} g_{\alpha/2}(t,v)dv=e^{-t\lambda^{\alpha/2}}$. Using $g_{\alpha/2}$, we can express the density as
	\begin{align}\label{dens_rep}
		p(1,x,0)=\int_0^\infty\frac{1}{(4\pi s)^{d/2}} e^{-|x|^2/(4s)}g_{\alpha/2}(1,s)ds.
	\end{align}
	This can be verified by taking the Fourier transform of the Equation (\ref{dens_rep}). For some constant $M>0$, and $|x|>M$, we can obtain the first partial derivative in $x_1$,
	\begin{align}\label{der_of_dens}
		\disp \partial^1_{x_1}p(1,x,0)=-2\pi x_1\int\frac{1}{(4\pi s)^{(d+2)/2}} e^{-|x|^2/(4s)}g_{\alpha/2}(1,s)ds.
	\end{align}
	Temporarily, let us denote the $d$-dimensional density function by $p^{(d)}(1,x,0)$ and define $\tilde x=(x,0,0)\in{\mathbb{R}^{d+2}}$ whenever $x\in\Rd$. The equation (\ref{der_of_dens}) can be written as
	$$\partial^1_{x_1}p^{(d)}(1,x,0)=-2\pi x_1 p^{(d+2)}(1,\tilde x,0)$$
	and hence using the upper-bound in (\ref{sas_estimate})
	$$\left|\partial^1_{x_1}p^{(d)}(1,x,0)\right|\leq \frac{c}{|x|^{d+\alpha+1}} $$
	which gives the result for $k=1$. The case $k=2$ is similar when we observe that
	$$\partial^2_{x_1}p^{(d)}(1,x,0)=-2\pi p^{(d+2)}(1,\tilde x,0)+4\pi^2x^2_1p^{(d+4)}(1,\overline{x},0)$$
	where $\overline{x}=(\tilde x ,0,0)\in\mathbb{R}^{d+4}$. 
\end{proof}

\section{A Bound on Jump Terms}

In this section, we will discuss some preliminary results on our jump process. One can find the details for general jump processes in \cite{bass_stochastic_pr}. First, let us define a new martingale by $M_t=f(X_{t\wedge T_0})$ where $X_t$ is the product process explained in the previous section and $f$ is a bounded measurable function. $M_t$ is a jump process and, moreover, $M_t$ has at most countably many jumps (see \cite[Ch. 17.1, p 132]{bass_stochastic_pr} for the details). So we can label these jumps as $T_1, T_2, ...$ so that each $T_i$ is either predictable or inaccessible with disjoint graphs. Moreover, the process $M_t$ has jumps only at these times $T_i$ and jump sizes are bounded. Here we do not assume that $T_i$'s are ordered, that is, $i<j$ does not imply $T_i<T_j$.

Now consider the process
$$A_t^n=\Delta M_{T_n} \ind_{\{t \geq T_n\}} \ind_{\{|\Delta Y_{T_n}|<|Z_{T_n}|^{2/\alpha}\}}.$$
We note that this process makes a jump of size $\Delta M$ at time $T_n$ and constant afterwards if the jumps size is small, that is, if the jump size $|\Delta Y_{T_n}|<|Z_{T_n}|^{2/\alpha}$. (In this paper, a jump at time $T_n$ is referred as a large jump, if its size is $|\Delta Y_{T_n}|\geq|Z_{T_n}|^{2/\alpha}$.) Since the vertical component  $Z_t$ of $X_t$ is continuous, the jumps correspond to those of the horizontal component $Y_t$. So this process ignores large jumps of $M_t$.

Since $f$ is real-valued, we can write $A_t^n$ as a difference of two increasing processes, one with positive jumps and the other with negative jumps. Hence $A_t^n$ admits a compensator, say $\tilde A_t^n$. So the process $M_t^n$ obtained as 
$$M_t^n=A_t^n-\tilde A_t^n$$
is a martingale. Note that $M-M^n$ has no small jumps at time $T_n$. Moreover, $M^n$'s are pairwise orthogonal, since each of them correspond to jumps at different times. In other words,
$$\Delta M_t^n\Delta M_t^m=0$$
whenever $n\not=m$ for all $t>0$. Then by the orthogonality Lemma \cite[Lemma 17.2]{bass_stochastic_pr}, we obtain
$$\E^{(x,a)}(M_\infty^nM_\infty^m)=0$$
and also
$$\E^{(x,a)}\left(M_\infty^n(M_\infty-\sum_{i=1}^mM_\infty^i)\right)=0$$
for $n\leq m.$ Then using this orthogonality property, we can write
\begin{align*}
	\E^{(x,a)}\left[(M_\infty)^2\right]&=\E^{(x,a)}\left[(M_\infty-\sum_{i=1}^mM_\infty^i+\sum_{i=1}^mM_\infty^i)^2\right]\\
							&=\E^{(x,a)}\left[(M_\infty-\sum_{i=1}^mM_\infty^i)^2+(\sum_{i=1}^mM_\infty^i)^2\right]\\
							&\geq\E^{(x,a)}\left[(\sum_{i=1}^mM_\infty^i)^2\right]\\
							&=\sum_{i=1}^m\E^{(x,a)}\left[(M_\infty^i)^2\right].
\end{align*}
Hence we see
\begin{align*}
	\sum_{i=1}^m\E^{(x,a)}\left[(M_\infty^i)^2\right]&= \E^{(x,a)}\left[(\sum_{i=1}^mM_\infty^i)^2\right]\leq \E^{(x,a)}\left[(M_\infty)^2\right]<\infty
\end{align*}
for any positive integer $m$, and so the series $ \sum_{i=1}^m\E^{(x,a)}\left[(M_\infty^i)^2\right]$ converges. We conclude that $\sum_{i=1}^m M_t^i$ converges in $L^2(\P^{(x,a)})$, say to $U_t$. So $U_t$ is the purely discontinuous part of $M_t$ consisting of small jumps. Hence $[U]_t=\sum_{0\leq s \leq t} (\Delta U_s)^2$. Denoting the continuous part by $M^c_t$, we write 
$$M_t=M_t^c+U_t+\tilde U_t$$
where $\tilde U_t$ is the part corresponding to large jumps. Since $Z_t$ is Brownian Motion, two processes $\Delta U_t$ and
$$\ind_{\{t < T_0\}} (f(Y_t,Z_t)-f(Y_{t-},Z_t)) \ind_{\{|Y_t - Y_{t-}|<|Z_{t}|^{2/\alpha}\}}$$
are indistinguishable. Moreover,
$$[M]_t=\langle M^c\rangle_t + \sum_{0\leq s \leq t} (\Delta M_s)^2$$
and 
$$[M]_\infty\geq [U]_\infty.$$
Then by the Burkholder-Davis-Gundy Inequality
\begin{align*}
	\Ema |U_t|^p &=\int \E^{(x,a)} |U_t|^p m(dx)\\
			&\leq \int \E^{(x,a)} \left( \sup_{0\leq s \leq t} |U_s|^p\right) m(dx)\\
			&\leq c \int \E^{(x,a)} [U]_t^{p/2}m(dx)
\end{align*}
and so 
\begin{align*}
	\Ema |U_\infty|^p &\leq c \Ema [U]_\infty^{p/2}\leq c \Ema [M]_\infty^{p/2}.
\end{align*}
After using the Burkholder-Davis-Gundy Inequality once again and applying Doob's Inequality, we have
\begin{align*}
	 \Ema [M]_t^{p/2}&\leq c\, \Ema \left( \sup_{0\leq s \leq t} |M_s|^p\right)\leq c\, \Ema |M_t|^{p}
\end{align*}
which leads to
\begin{align*}
	 \Ema [M]_\infty^{p/2}\leq c\, \Ema |M_\infty|^{p}
\end{align*}
and hence by (\ref{law_of_exit})
\begin{align}\label{U_f_relation}
	 \Ema |U_\infty|^{p}\leq c\, \Ema |M_\infty|^{p}&=c\,\int|f(x)|^p\Pma(X_{T_0}\in dx)\\
										&=c\,\int|f(x)|^p m(dx)\\
										&=c\, \|f\|_p^p.
\end{align}

\section{A Multiplier Theorem}\label{multiplier_section}

Let $r(t)$ be a bounded Borel function on $\Rp$ and $\alpha \in (0,1)$. Define 

\begin{align}\label{op_T}
   Tf(x) &= \int_0^\infty t \cdot r(t) \, \int _{|h|<t^{2/\alpha}} [-(Q_t\tau_{h/2}-Q_t\tau_{-h/2})^2](f)(x) \frac{dh}{|h|^{d+\alpha}} dt 
\end{align}
where $\tau_h$ is the translation operator given by $$\tau_hf(x)=f(x+h)$$ and $Q_tf$ is the extension of $f$ to upper-half space $\Rd\times\Rp$ as described before, that is, $$Q_tf(x)=f_t(x)=f(x,t)=\E^{(x,t)}f(Y_{T_0}).$$ First we should note that the semigroup $Q_t$ commutes with the translation operator. This follows from the commutative property of the semigroup $P_s$ of the L\'evy Process $Y_s$ with $\tau_h$. To be clear,
\begin{align*}
    Q_t\tau_hf(x)&=\int_0^\infty P_s \tau_h f(x) \mu_t(ds) =\int_0^\infty \tau_hP_s  f(x) \mu_t(ds) \\
		&=\int_0^\infty P_s  f(x+h) \mu_t(ds) =\tau_hQ_tf(x),
\end{align*}  
for any $x\in\Rd$. Hence by using the commutativity of operators $Q_t$ and $\tau_h$ and the semigroup property of $Q_t$, we can write 
 \begin{align*}
    -(Q_t\tau_{h/2}-Q_t\tau_{-h/2})^2f&=(2Q_{2t}-Q_{2t}\tau_h-Q_{2t}\tau_{-h})f\\
				&=2f_{2t}(x)-f_{2t}(x+h)-f_{2t}(x-h).
 \end{align*}
In this form, it is easy to see that $T$ is linear, that is, for any $f,g\in L^p$ and $c\in\R$, we have $T(cf+g)=cTf+Tg$.

Second, we will show that this operator is well-defined on $C_c^\infty(\Rd)$, the space of smooth and compactly supported functions, which is dense in $L^p(\Rd)$. For this purpose, let $f\in C_c^\infty(\Rd)$. Since $r$ is a bounded function and $$2f_{2t}(x)-f_{2t}(x+h)-f_{2t}(x-h)=\left(f_{2t}(x)-f_{2t}(x+h)\right)+\left(f_{2t}(x)-f_{2t}(x-h)\right),$$ it is enough to consider the following two integrals:
\begin{align*}
   I_1 &= \int_0^1 t  \, \int _{|h|<t^{2/\alpha}} \left(f_{2t}(x+h)-f_{2t}(x)\right) \frac{dh}{|h|^{d+\alpha}} dt 
\end{align*}
and
\begin{align*}
   I_2 &= \int_1^\infty t  \, \int _{|h|<t^{2/\alpha}}  \left(f_{2t}(x+h)-f_{2t}(x)\right)  \frac{dh}{|h|^{d+\alpha}} dt. 
\end{align*}
To bound the first integral, note that 
\begin{align*}
	 \left|f_{2t}(x+h)-f_{2t}(x)\right| &\leq\int_0^\infty \int |f(y+h)-f(y)| p(s,x,y)dy\,\mu_{2t}(ds)\\
							&\leq c \|\nabla f\|_\infty |h|.
\end{align*}
Then 
\begin{align}\label{first_int_bound}
	 \left|I_1\right| &\leq c \|\nabla f\|_\infty \int_0^1 t  \, \int _{|h|<t^{2/\alpha}}  \frac{dh}{|h|^{d+\alpha-1}} dt\leq c \|\nabla f\|_\infty.
\end{align}
To bound the second integral we use Lemma \ref{lemma_der_of_dens}. We observe that
\begin{align*}
	|p(s,x+h,y)-p(s,x,y)|\leq c |h| s^{-1/\alpha}p(s,\xi,y)\leq c |h| s^{-1/\alpha}s^{-d/\alpha}.
\end{align*}
Hence
\begin{align*}
	 \left|f_{2t}(x+h)-f_{2t}(x)\right| &\leq\int_0^\infty \int |f(y)| |p(s,x+h,y)-p(s,x,y)|dy\,\mu_{2t}(ds)\\
							&\leq c \| f\|_1 |h|\int_0^\infty s^{-(d+1)/\alpha}\mu_{2t}(ds)\\
							&\leq c \| f\|_1 |h| \, t^{-(2d+2)/\alpha}.
\end{align*}
Then
\begin{align}\label{second_int_bound}
	 \left|I_2\right| &\leq c \| f\|_1 \int_1^\infty t\cdot  t^{-(2d+2)/\alpha}  \, \int _{|h|<t^{2/\alpha}}  \frac{dh}{|h|^{d+\alpha-1}} dt\leq c \|\ f\|_1.
\end{align}
Hence $T$ is well-defined on $C_c^\infty(\Rd)$.

We note that the above are finite if $\alpha\in (0,1)$. Although the tools on the jump processes in section 1 works whenever $\alpha \in (0,2)$, the operator $T$ is well-defined if $\alpha<1$. Hence we restrict our result to the case $\alpha\in (0,1)$.

Our main result is that $T$ is a bounded operator on $L^p$.

\begin{theorem}
$T$, as defined in (\ref{op_T}), is a bounded operator on $L^p(\Rd)$ for $p>1$ and $\alpha\in (0,1)$.
\end{theorem}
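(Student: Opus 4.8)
\emph{Step 1: a symmetric form of the pairing.} The plan is to prove the dual bilinear estimate $|\langle Tf,g\rangle|\le c\,\|f\|_p\,\|g\|_{p'}$ for all $f,g\in C_c^\infty(\Rd)$ and then to conclude by duality, using the density of $C_c^\infty(\Rd)$ in $L^p(\Rd)$. For $h\in\Rd$ put $\delta_h\phi(x,t):=\phi(x+h,t)-\phi(x,t)$, applied below to $f$, $g$ and their harmonic extensions. Using the identity $-(Q_t\tau_{h/2}-Q_t\tau_{-h/2})^2f=2f_{2t}-\tau_hf_{2t}-\tau_{-h}f_{2t}$ established above, the self-adjointness of $Q_t$ on $L^2(\Rd)$, the semigroup relation $Q_{2t}=Q_tQ_t$, and the evenness of $|h|^{-d-\alpha}$ in $h$, I would first rewrite
\begin{align*}
	\langle Tf,g\rangle=\int_0^\infty t\,r(t)\int_{|h|<t^{2/\alpha}}\int_{\Rd}\delta_hf(y,t)\,\delta_hg(y,t)\,dy\,\frac{dh}{|h|^{d+\alpha}}\,dt;
\end{align*}
the interchange of integrals is justified for $f,g\in C_c^\infty(\Rd)$ by the estimates used above for $I_1$ and $I_2$.

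\emph{Step 2: probabilistic representation.} Let $M_s=f(X_{s\wedge T_0})$ and $N_s=g(X_{s\wedge T_0})$ be the harmonic-extension martingales, and let $U^f,U^g$ be their purely discontinuous small-jump parts, constructed exactly as $U$ in Section 2. Since $Z$ is continuous, a jump of $U^f$ at a time $s<T_0$ equals $f(Y_s,Z_s)-f(Y_{s-},Z_s)=\delta_hf(Y_{s-},Z_s)$, where $h=\Delta Y_s$ satisfies $|h|<|Z_s|^{2/\alpha}$---precisely the truncation appearing in (\ref{op_T}). The process $Y$ has L\'evy (jump) kernel a constant multiple of $dh/|h|^{d+\alpha}$, so---since $U^f,U^g\in L^2$ and $r$ is bounded---the compensation (L\'evy system) formula lets us replace, without changing the $\Ema$-expectation, $\sum_{s\le T_0}r(Z_s)\Delta U^f_s\Delta U^g_s$ by $c\int_0^{T_0}r(Z_s)\int_{|h|<|Z_s|^{2/\alpha}}\delta_hf(Y_s,Z_s)\,\delta_hg(Y_s,Z_s)\,|h|^{-d-\alpha}\,dh\,ds$. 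Using that $T_0$ depends only on $Z$, that $Y$ is independent of $Z$, and the invariance of $P_s$ under Lebesgue measure, the $Y$-integral becomes an integral over $\Rd$, and the vertical Green function (\ref{greens_function}) then integrates out $Z_s$:
\begin{align*}
	\Ema\Big[\sum_{s\le T_0}r(Z_s)\,\Delta U^f_s\,\Delta U^g_s\Big]=c\int_0^\infty (z\wedge a)\,r(z)\int_{|h|<z^{2/\alpha}}\int_{\Rd}\delta_hf(y,z)\,\delta_hg(y,z)\,dy\,\frac{dh}{|h|^{d+\alpha}}\,dz.
\end{align*}
Letting $a\to\infty$---dominated convergence, with an integrable majorant supplied by the $I_1,I_2$-type bounds, which is where $\alpha<1$ enters---and comparing with Step 1 yields $\langle Tf,g\rangle=c\lim_{a\to\infty}\Ema\big[\sum_{s\le T_0}r(Z_s)\Delta U^f_s\Delta U^g_s\big]$.

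\emph{Step 3: the estimate and conclusion.} As $U^f$ and $U^g$ are purely discontinuous, $[U^f]_{T_0}=\sum_{s\le T_0}(\Delta U^f_s)^2$ and similarly for $g$; hence, by the bound on $r$ and the Cauchy--Schwarz inequality for sums, $\big|\sum_{s\le T_0}r(Z_s)\Delta U^f_s\Delta U^g_s\big|\le\|r\|_\infty\,[U^f]_{T_0}^{1/2}\,[U^g]_{T_0}^{1/2}$. Taking $\Ema$, applying H\"older's inequality with exponents $p$ and $p'$, and then the chain of Burkholder--Davis--Gundy and Doob inequalities of Section 2 that yields (\ref{U_f_relation})---once with exponent $p>1$ and once with exponent $p'>1$---we obtain, uniformly in $a$,
\begin{align*}
	\Big|\Ema\Big[\sum_{s\le T_0}r(Z_s)\,\Delta U^f_s\,\Delta U^g_s\Big]\Big|\le\|r\|_\infty\big(\Ema[U^f]_{T_0}^{p/2}\big)^{1/p}\big(\Ema[U^g]_{T_0}^{p'/2}\big)^{1/p'}\le c\,\|f\|_p\,\|g\|_{p'}.
\end{align*}
Combined with Step 2 this gives $|\langle Tf,g\rangle|\le c\,\|f\|_p\,\|g\|_{p'}$; taking the supremum over $g\in C_c^\infty(\Rd)$ with $\|g\|_{p'}\le1$ yields $\|Tf\|_p\le c\,\|f\|_p$ for $f\in C_c^\infty(\Rd)$, and density completes the proof.

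I expect the main obstacle to be Step 2: matching the small-jump truncation exactly with the set $\{|h|<t^{2/\alpha}\}$ of (\ref{op_T}), justifying the compensation identity as an equality of finite expectations rather than merely modulo a local martingale, and establishing the absolute integrability---available precisely because $\alpha\in(0,1)$---needed both for the Fubini interchange in Step 1 and for the limit $a\to\infty$.
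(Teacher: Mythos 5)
Your proposal is correct and follows essentially the same route as the paper: reduce to the bilinear estimate $|\langle Tf,g\rangle|\le c\|f\|_p\|g\|_{p'}$, rewrite the pairing in the symmetric form $\int\!\int t\,r(t)\,\delta_h f_t\,\delta_h g_t\,|h|^{-d-\alpha}$, identify it (via the L\'evy system formula, invariance of $P_s$ under $m$, and the vertical Green function, with $a\to\infty$) with the limit of an $\Ema$-expectation of a jump covariation, and bound that expectation by H\"older plus Burkholder--Davis--Gundy plus (\ref{U_f_relation}). The only noticeable variation is cosmetic: the paper introduces the stochastic integral $V_t=\int_0^t r(Z_s)\,dU_s$, pairs it with the \emph{full} martingale $N_t=g(X_{t\wedge T_0})$ via the jump product rule, and then uses $\Ema|N_\infty|^q=\|g\|_q^q$ directly from (\ref{law_of_exit}); you bypass $V$ entirely, bound $\big|\sum r(Z_s)\,\Delta U^f_s\,\Delta U^g_s\big|$ pathwise by $\|r\|_\infty\,[U^f]_{T_0}^{1/2}[U^g]_{T_0}^{1/2}$ via Cauchy--Schwarz, and then H\"older the two quadratic variations with exponents $p$ and $p'$, which requires the BDG--Doob chain on the $g$-side as well. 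Both yield exactly the same bound, and the substitution of $U^g$ for $N$ is harmless because $\Delta V_s\Delta N_s$ is already supported on small jumps, so $\Delta N_s$ can be replaced by $\Delta U^g_s$ there.
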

\begin{proof}
Let $f\in C_c^\infty(\Rd)$, $r$ be a bounded Borel function and $T$ be as defined in (\ref{op_T}). Without loss of generality we may assume $r$ is positive valued, since otherwise we can write this function as a combination of positive and negative parts, $r^+-r^-$, and apply the argument to both $r^+$ and $r^-$. 

We prove the statement in four steps.

In the first step, we will use the purely discontinuous martingale $U_t$ which was defined in the previous section. By means of $U_t$, let us define a new martingale 
$$V_t=\int_0^t r(Z_s)dU_s$$
where $Z_s$ is the 1-dimensional Brownian component of $X_s$.
By considering
\begin{align*}
   \Delta V_t&=\int_{t-}^{t} r(Z_s) dU_s=r(Z_t) \Delta U_t \\
				&= \ind_{\{0 \leq t \leq T_0\}} \ind_{\{|\Delta Y_t|<|Z_s|^{2/\alpha}\}} (f(Y_t,Z_t)-f(Y_{t-},Z_t))\cdot r(Z_t)
\end{align*}
and using Burkholder-Davis-Gundy inequality, we can bound the expectation of this martingale. That is,
\begin{align}\label{v-infty}
   \Ema|V_\infty|^p &=\int\E^{(x,a)}|V_\infty|^p m(dx)\leq c \, \int \E^{(x,a)} [V]_\infty^{p/2} m(dx)\\
			&\leq c \, \int \E^{(x,a)} [U]_\infty^{p/2} m(dx)\leq c \, \int \E^{(x,a)} |U_\infty|^{p} m(dx)\leq c \, ||f||^p_p \nonumber
\end{align}
by the Inequality (\ref{U_f_relation}).

Now let $q$ be the conjugate exponent of $p$ so that $(1/p)+(1/q)=1$ and $g$ be a compactly supported smooth function in $L^q(\Rd)\cap L^2(\Rd)$. Similar to before, we denote the extension of $g$ by $Q_tg(x)$ or by $g(x,t)$. We also denote the martingale which is obtained from $g$ by $N_t=g(X_{t\wedge T_0})$.

By the product rule for martingales with jumps (see \cite[pp.231]{applebaum}) we obtain
\begin{align*}
  \Ema(V_\infty N_\infty) &= \int\E^{(x,a)}(V_\infty N_\infty) m(dx)=\int \E^{(x,a)} \left(\sum_{t>0} \Delta V_t \Delta N_t\right) m(dx). 
\end{align*}
Note that 
\begin{align*}
   \Delta V_t \Delta N_t= \ind_{\{0 \leq t \leq T_0\}}\ind_{\{|\Delta Y_t|<|Z_s|^{2/\alpha}\}} r(Z_t)\cdot (f(Y_t,Z_t)-f(Y_{t-},Z_t))\\\cdot (g(Y_t,Z_t)-g(Y_{t-},Z_t)). 
\end{align*}
Throughout the rest of this section, we will denote the above function by $\Lambda$ for the sake of simplicity. We write for any $(x,y,t)\in\Rd\times\Rd\times\Rp$
\begin{align*}
   \Lambda(x,y,t)=  \ind_{\{|y-x|<|t|^{2/\alpha}\}}r(t)\cdot (f(y,t)-f(x,t))\cdot (g(y,t)-g(x,t)). 
\end{align*}
Then we have 
\begin{align}\label{main1}
  \Ema(V_\infty N_\infty) &= \Ema \left(\sum_{0\leq s\leq T_0} \Lambda(Y_{s-},Y_s,Z_s) \right). 
\end{align}
If we use the L\'evy system formula \cite[Theorem 1]{karli} here, we obtain
\begin{align*}
    \Ema(\sum_{0\leq s \leq T_0} \Lambda(Y_{s-},Y_s,Z_s)) = \int \E^{(x,a)}\left(\sum_{0\leq s \leq T_0} \Lambda(Y_{s-},Y_s,Z_s)\right) m(dx)\\ \\
			= c \, \int \E^{(x,a)}\left(\int_0^{T_0} \int \Lambda(Y_{s},Y_s+h,Z_s) \,\frac{dh}{|h|^{d+\alpha}} \, ds  \right)m(dx). 
\end{align*} 
The semigroup $P_t$ is invariant under the measure $m$. Hence the last line equals
\begin{align}\label{int_green}
   c \,  \int \E^a\left(\int_0^{T_0} \int \Lambda(x,x+h,Z_s) \,\frac{dh}{|h|^{d+\alpha}}\, ds  \right)\, m(dx).
\end{align} 
Next, if we set 
$$\beta_x(t)= \int \Lambda(x,x+h,t) \,\frac{dh}{|h|^{d+\alpha}}$$
and write $\beta_x=\beta_x^+-\beta^-$ where $\beta_x^+$ and $\beta_x^-$ are positive and negative parts of the function, respectively, then we can apply the Green Kernel of the vertical component (i.e., the Brownian component) to the integrals
$$\E^a\left(\int_0^{T_0} \beta_x^+(Z_s)\, ds  \right) - \E^a\left(\int_0^{T_0}\beta_x^-(Z_s) \, ds  \right).$$
Then the integral in (\ref{int_green}) becomes
\begin{align}\label{main2}
  \lefteqn{ \Ema(\sum_{0\leq s \leq T_0} \Lambda(Y_{s-},Y_s,Z_s)) }\\&\qquad= c \,  \int \int_0^{\infty} (t\wedge a) \int \Lambda(x,x+h,t) \,\frac{dh}{|h|^{d+\alpha}} \,dt  \, m(dx). 
\end{align} 

In the second step, we will express $Tf$ in terms of the above integral. For this purpose, we will denote the usual inner product by $\langle\cdot,\cdot\rangle$ and write
$$\langle f,g\rangle=\int f(x)g(x)m(dx).$$
If $g$ is as taken before, we can write
\begin{align}\label{inner_product}
	 \lefteqn{ \langle Tf,g\rangle=} \\&\int g(x)\int_0^\infty t \, r(t) \int_{|h|<t^{2/\alpha}} \left( Q_t\tau_{h/2}-Q_t\tau_{-h/2}\right)^2(f)(x) \frac{dh}{|h|^{d+\alpha}}\,dt\, m(dx).
\end{align}
By the Inequalities (\ref{first_int_bound}) and (\ref{second_int_bound}) and using the assumption that $g$ is compactly supported and continuous, we conclude that this inner product is bounded, that is,
\begin{align*}
	|\langle Tf,g\rangle |\leq c \|r\|_\infty (\|\nabla f\|_\infty+\|f\|_1)\|g\|_1<\infty.
\end{align*}
Hence one can use Fubini to interchange the order of integrals in (\ref{inner_product}) to write
\begin{align*}
	\langle Tf,g\rangle&=\int_0^\infty t \, r(t) \int_{|h|<t^{2/\alpha}} \langle \left( Q_t\tau_{h/2}-Q_t\tau_{-h/2}\right)^2(f),g\rangle \frac{dh}{|h|^{d+\alpha}}\,dt\,.
\end{align*}
At this point, as a side note, we observe that this inner product relates to $\overrightarrow{G}_{f,\alpha}$ which was defined in \cite{karli} by
\begin{align*}
	\overrightarrow{G}_{f,\alpha}(x)  &  =\disp\left[ \int_0^\infty t\, \int_{|h|<t^{2/\alpha}} [f_t(x+h)-f_t(x)]^2\frac{dh}{|h|^{d+\alpha}} \,dt  \right]^{1/2} .
\end{align*}

 To see this, we use commutativity of $\tau_h$ and $Q_t$, symmetry of $Q_t$ and linearity of the inner product.

\begin{align*}
	 \lefteqn{< -(Q_t\tau_{h/2}-Q_t\tau_{-h/2})^2 f , g >}\qquad\\& =< 2 Q_{2t} f-  Q_{2t} \tau_{-h} f -Q_{2t}\tau_h f ,  g >\\
		&= <  Q_{2t} f-  Q_{2t} \tau_{-h} f ,  g >-<  Q_{2t}\tau_h f -Q_{2t} f,  g >\\
		& =  <  \tau_{-h}Q_{2t}\tau_h f-\tau_{-h}Q_{2t} f,  g> - <  Q_{2t}\tau_h f-Q_{2t} f,  g> \\
		& = <  Q_{2t}\tau_h f-Q_{2t} f, \tau_h g> - <  Q_{2t}\tau_h f-Q_{2t} f,  g> \\
		&= <  Q_{2t}\tau_h f-Q_{2t} f , \tau_h g- g > \\
		&= <  Q_t(Q_t\tau_h f-Q_t f) , \tau_h g- g > \\
		&= <  Q_t\tau_h f-Q_t f , Q_t(\tau_h g- g)  > \\ 
		& = <  Q_t\tau_h f-Q_t f , Q_t\tau_h g-Q_t g  > \\
		&=\int    (Q_tf(x+h)-Q_tf(x))(Q_tg(x+h)-Q_tg(x)) m(dx)\\
		&=\int    (f_t(x+h)-f_t(x))(g_t(x+h)-g_t(x)) m(dx).
\end{align*}
This equality also shows that
\begin{align}\label{main3}
	\lefteqn{ \langle Tf,g\rangle= \int \int_0^{\infty} t \int \Lambda(x,x+h,t) \,\frac{dh}{|h|^{d+\alpha}} \,dt  \, m(dx)}\\
					&= \int \int_0^{\infty} t \int_{|h|<t^{2/\alpha}} (f_t(x+h)-f_t(x))(g_t(x+h)-g_t(x)) \,\frac{dh}{|h|^{d+\alpha}} \,dt  \, m(dx).\nonumber
\end{align}

In the third step, we will find a bound for the inner product above. For this purpose, we apply the H\"older Inequality first to get
\begin{align*}
 | \Ema(V_\infty N_\infty)| &\leq  \Ema(|V_\infty|^p)^{1/p}\cdot  \Ema(| N_\infty|^q)^{1/q}. 
\end{align*}
By (\ref{v-infty}), we obtain $ \Ema(|V_\infty|^p)^{1/p}\leq c \|f\|_p$ and also we have $ \Ema(| N_\infty|^q)^{1/q}=\|g\|_q$  by (\ref{law_of_exit}) . Hence
\begin{align}\label{main3}
  \left|  \int \int_0^{\infty} (t\wedge a) \int \Lambda(x,x+h,t) \,\frac{dh}{|h|^{d+\alpha}} \,dt  \, m(dx)\right|\leq c \|f\|_p\|g\|_q 
\end{align} 
for any $a>0$, by (\ref{main1}) and (\ref{main2}). Now if we let $a\rightarrow \infty$, then we obtain
\begin{align}\label{main4}
	| \langle Tf,g\rangle |&\leq c \|f\|_p\|g\|_q.
\end{align}
Hence $T$ is a bounded operator on $C_c^\infty(\Rd)$ and it extends to $L^p(\Rd)$.
\end{proof}

\begin{corollary}
The function 
  $$
      m(\xi)=c\, |\xi|^{\alpha} \, \int_0^\infty t\cdot r(t) \, e^{-2t|\xi|^{\alpha/2}} \,dt
  $$
is an $L^p(\Rd)$-multiplier for $p>1$ and $\alpha\in (0,1)$.

\end{corollary}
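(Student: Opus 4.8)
The plan is to show that $T$ is exactly the Fourier multiplier operator with symbol $m$, so that the Theorem — which gives $\|Tf\|_p\le c\|f\|_p$ for $p>1$ and $\alpha\in(0,1)$ — immediately yields the assertion (the restriction $\alpha\in(0,1)$ is inherited from there). Concretely, I would take Fourier transforms in $x$ inside (\ref{op_T}). Using the already-established identity $-(Q_t\tau_{h/2}-Q_t\tau_{-h/2})^2 f = 2f_{2t}-\tau_hf_{2t}-\tau_{-h}f_{2t}$ together with $\widehat{Q_tg}(\xi)=e^{-t|\xi|^{\alpha/2}}\widehat{g}(\xi)$ from (\ref{q_transform}) and $\widehat{\tau_hg}(\xi)=e^{ih\cdot\xi}\widehat{g}(\xi)$, the Fourier transform of the inner expression is $\big(2-e^{ih\cdot\xi}-e^{-ih\cdot\xi}\big)e^{-2t|\xi|^{\alpha/2}}\widehat{f}(\xi)=2\big(1-\cos(h\cdot\xi)\big)e^{-2t|\xi|^{\alpha/2}}\widehat{f}(\xi)$, which is now nonnegative — this being precisely why the definition of $T$ carries the overall minus sign.

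Interchanging the order of integration, which is legitimate on $f\in C_c^\infty(\Rd)$ by the same bounds (\ref{first_int_bound})--(\ref{second_int_bound}) that were used to see $T$ is well defined, then gives $\widehat{Tf}(\xi)=m(\xi)\widehat{f}(\xi)$ with
\begin{align*}
 m(\xi)=\int_0^\infty t\,r(t)\,e^{-2t|\xi|^{\alpha/2}}\left(\int 2\big(1-\cos(h\cdot\xi)\big)\,\frac{dh}{|h|^{d+\alpha}}\right)dt.
\end{align*}
The inner integral is the classical L\'evy--Khintchine computation for $(-\Delta)^{\alpha/2}$: by rotational invariance and the dilation $h\mapsto|\xi|^{-1}h$ it equals $C(d,\alpha)\,|\xi|^\alpha$ with $0<C(d,\alpha)<\infty$ (the integrand is $O(|h|^{2-d-\alpha})$ near $0$, integrable since $\alpha<2$, and $O(|h|^{-d-\alpha})$ at infinity). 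Substituting this and then $u=2t|\xi|^{\alpha/2}$ recovers the stated $m(\xi)=c|\xi|^\alpha\int_0^\infty t\,r(t)e^{-2t|\xi|^{\alpha/2}}\,dt=\tfrac c4\int_0^\infty u\,r\!\big(u/(2|\xi|^{\alpha/2})\big)e^{-u}\,du$, whence also $\|m\|_\infty\le\tfrac c4\|r\|_\infty<\infty$. Combined with the Theorem, this proves $m$ is an $L^p(\Rd)$-multiplier for $p>1$.

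The step that requires genuine care — and the one I expect to be the main obstacle — is the cutoff $|h|<t^{2/\alpha}$ in (\ref{op_T}), which I suppressed above. Keeping it, the inner integral becomes $\int_{|h|<t^{2/\alpha}}2(1-\cos(h\cdot\xi))|h|^{-d-\alpha}\,dh=C(d,\alpha)|\xi|^\alpha\,\psi(t^{2/\alpha}|\xi|)$ for an explicit increasing profile $\psi$ with $\psi(0^+)=0$, $\psi(\infty)=1$ and $0\le 1-\psi(R)\le c\min(R^{-\alpha},1)$, so the honest symbol of $T$ carries this extra $t$-dependent factor. The clean $m$ in the statement is therefore the symbol of the operator $\widetilde{T}$ in which $h$ is integrated over all of $\Rd$, and to transfer $L^p$-boundedness from $T$ to $\widetilde{T}$ one must show the remainder $\widetilde{T}-T$ (the part with $|h|\ge t^{2/\alpha}$) is bounded on $L^p$. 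Its symbol, $c|\xi|^\alpha\int_0^\infty t\,r(t)e^{-2t|\xi|^{\alpha/2}}\big(1-\psi(t^{2/\alpha}|\xi|)\big)\,dt$, becomes after the substitution $u=2t|\xi|^{\alpha/2}$ a bounded — and, with a little more work, Mihlin-smooth — function of $\xi$, so $\widetilde{T}-T$ is $L^p$-bounded for $p>1$; alternatively one verifies directly that the tail operator (with its $\int_{|h|\ge t^{2/\alpha}}|h|^{-d-\alpha}\,dh\sim t^{-2}$ weight) defines a Calder\'on--Zygmund operator. Either route closes the gap, and everything outside this remainder estimate is routine symbol bookkeeping.
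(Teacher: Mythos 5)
Your computational route matches the paper's exactly: Fourier-transform $Tf$, apply Fubini, recognize the L\'evy--Khintchine integral $\int(2-2\cos(h\cdot\xi))|h|^{-d-\alpha}\,dh=C(d,\alpha)|\xi|^\alpha$, and read off $m(\xi)$. What you flag as the main obstacle --- the cutoff $|h|<t^{2/\alpha}$ in (\ref{op_T}) --- is genuine, and, tellingly, the paper's own proof of this corollary omits it: the very first displayed line of the paper's calculation already writes the $h$-integral over all of $\Rd$ rather than over $\{|h|<t^{2/\alpha}\}$. So the paper silently replaces $T$ by the uncut operator $\widetilde T$, and the clean $m(\xi)$ it obtains is the symbol of $\widetilde T$, not of $T$. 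You have located a real gap that the published argument also carries.

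Your proposed repair, however, is not closed, and the Mihlin branch of it is suspect. After the substitution $u=2t|\xi|^{\alpha/2}$ the remainder symbol is $\tfrac{c}{4}\int_0^\infty u\,r\!\left(u/(2|\xi|^{\alpha/2})\right)e^{-u}\,\Phi\!\left(u^{2/\alpha}2^{-2/\alpha}\right)du$ with $\Phi(R)=\int_{|k|\ge R}(2-2\cos(k\cdot u))|k|^{-d-\alpha}\,dk$; a $\xi$-derivative therefore lands on $r$, which is only assumed bounded and Borel, so Mihlin's criterion does not apply without extra regularity on $r$. The Calder\'on--Zygmund alternative is more plausible because the kernel's $x$-smoothness is inherited from $q_{2t}$ rather than from $r$, but that estimate is far from routine and is not carried out. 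A cleaner fix, arguably what the paper must have intended, is to drop the restriction to small jumps when building $U_t$: with $A_t^n$ defined using \emph{all} jumps of $M_t$, the Burkholder--Davis--Gundy and L\'evy-system steps in the Theorem run unchanged (the $h$-integral still converges near $0$ since $\Lambda=O(|h|^2)$ there), giving $L^p$-boundedness of $\widetilde T$ directly, after which the Fourier computation applies verbatim. As your proposal stands, the passage from $T$ to $\widetilde T$ is identified as the key step but remains unproved.
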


\begin{proof}
In the theorem above, we proved that the operator $T$ is bounded on $L^p(\Rd)$. The multiplier of interest here is the multiplier corresponding to this operator. To see this, we take the Fourier transform of $Tf$ and use Fubini's theorem.
\mathleft
 \begin{align*}
	 \lefteqn{\widehat{(Tf)}(\xi) =\int Tf(x)\,e^{i\xi x} m(dx)}\\
                     &= \int e^{i\xi x} \int_0^\infty t \cdot r(t) \int \left[ -(Q_t\tau_{h/2}-Q_t\tau_{-h/2})^2 \right] (f)(x) \frac{dh}{|h|^{d+\alpha}} \, dt \, m(dx)\\
                     &= \int_0^\infty t \cdot r(t) \int \int e^{i\xi x} \left[ -(Q_t\tau_{h/2}-Q_t\tau_{-h/2})^2 \right] (f)(x)\, m(dx)\, \frac{dh}{|h|^{d+\alpha}} \, dt \\
                     &= \int_0^\infty t \cdot r(t) \int \left( \left[ -(Q_t\tau_{h/2}-Q_t\tau_{-h/2})^2 \right] (f) \right)\hat{}(\xi) \, \frac{dh}{|h|^{d+\alpha}} \, dt \\
                     &= \int_0^\infty t \cdot r(t) \int \left( \left[ -Q_{2t}\tau_{h}-Q_{2t}\tau_{-h}+2Q_{2t} \right]  \right)\hat{}(\xi) \, \hat{f}(\xi)\frac{dh}{|h|^{d+\alpha}} \, dt \\
                     &=  \hat{f}(\xi)\int_0^\infty t \cdot r(t) \int \left[ -(Q_{2t})\hat{}(\xi) \, e^{-i\xi h}-(Q_{2t})\hat{}(\xi) \, e^{i\xi h}+2(Q_{2t})\hat{}(\xi)  \right]   \,\frac{dh}{|h|^{d+\alpha}} \, dt. \\
 \end{align*}
\mathcenter
As stated in (\ref{q_transform}), $Q_t$ has the Fourier transform $e^{-t|\xi|^{\alpha/2}}$. Hence we have the above integral equals
 \begin{align*}
                     &=  \hat{f}(\xi) \int_0^\infty t \cdot r(t) \int \left[ -e^{-i\xi h}- e^{i\xi h}+2  \right]  e^{-2t|\xi|^{\alpha/2}}  \, \frac{dh}{|h|^{d+\alpha}} \, dt \\
                     &=  \hat{f}(\xi) \int_0^\infty t \cdot r(t) \, e^{-2t|\xi|^{\alpha/2}}\, \int \left[ -(e^{i\xi h/2}- e^{-i\xi h/2})^2  \right]    \, \frac{dh}{|h|^{d+\alpha}} \, dt \\
                     &=  \hat{f}(\xi) \int_0^\infty t \cdot r(t) \, e^{-2t|\xi|^{\alpha/2}}\, \int 4 \,\sin^2(\xi h/2)    \, \frac{dh}{|h|^{d+\alpha}} \, dt \\
                     &=  \hat{f}(\xi) \int_0^\infty t \cdot r(t) \, e^{-2t|\xi|^{\alpha/2}}\, \int 4 \,\sin^2(|\xi| uh/2)    \, \frac{dh}{|h|^{d+\alpha}} \, dt \\
 \end{align*}
where $u$ is the unit vector in the direction of $\xi$. Then the last line equals
 \begin{align*}
                    \left[ 2^{2-\alpha}  \int  \,\sin^2(uh)    \, \frac{dh}{|h|^{d+\alpha}}\right]  \,\hat{f}(\xi)\, |\xi|^{\alpha}  \int_0^\infty t \cdot r(t) \, e^{-2t|\xi|^{\alpha/2}}\, dt. 
 \end{align*}
  The first integral is finite for any $\alpha \in (0,2)$. Hence the last line is equal to
 $$
     c\,\hat{f}(\xi)\,|\xi|^{\alpha}  \int_0^\infty t \cdot r(t) \, e^{-2t|\xi|^{\alpha/2}}\, dt,
 $$
 which finishes our calculation.

\end{proof}

\subsection*{Acknowledgements}
This research project is supported by the BAP grant, numbered 14B103, at the I\c{s}\i k University, Istanbul, Turkey.

\end{document}